\newtheorem{theorem}{Theorem}[section]
\newtheorem{proposition}[theorem]{Proposition}
\newtheorem{lemma}[theorem]{Lemma}
\newtheorem{corollary}[theorem]{Corollary}
\theoremstyle{definition}
\theoremstyle{remark}
\newtheorem*{remark}{Remark}
\numberwithin{equation}{section}
\newcommand{\RE}{\mbox{$\mathbb{R}$}}
\newcommand{\CEP}[1]{\mbox{$\mathbb{C}^{#1}$}}
\newcommand{\supp}{\mbox{supp}}
\newcommand{\E}{\mbox{$\mathcal{E}$}}
\newcommand{\Eo}{\mbox{$\mathcal{E}_{0}$}}
\newcommand{\Ep}{\mbox{$\mathcal{E}_{p}$}}
\newcommand{\ddcn}[1]{\mbox{$\left(dd^{c}#1\right)^{n}$}}
\newcommand{\M}{\mbox{$\mathcal{M}$}}
\newcommand{\J}{\mathcal{J}_{\mu}}
\begin{document}

\author{Per \AA hag}
\address{Department of Mathematics and Mathematical Statistics\\ Ume\aa \ University\\ SE-901 87 Ume\aa \\ Sweden}
\email{Per.Ahag@math.umu.se}
\author{Urban Cegrell}
\address{Department of Mathematics and Mathematical Statistics\\ Ume\aa \ University\\ SE-901 87 Ume\aa \\ Sweden}
\email{Urban.Cegrell@math.umu.se}
\author{Rafa\l\ Czy{\.z}}
\address{Institute of Mathematics\\ Jagiellonian University\\ \L ojasiewicza 6\\ 30-348 Krak\'ow\\ Poland}
\email{Rafal.Czyz@im.uj.edu.pl}
\keywords{Complex Monge-Amp\`{e}re operator, Dirichlet principle, elliptic equation, variational methods.}
\subjclass[2000]{Primary 35J20; Secondary 32W20.}
\title{On Dirichlet's  principle and problem}

\begin{abstract} The aim of this paper is to give a new proof of the complete characterization of measures for which there exist a solution of the Dirichlet problem for the complex Monge-Amp\`{e}re operator in the set of plurisubharmonic functions with finite pluricomplex energy. The proof uses variational methods.
\end{abstract}

\maketitle

\section{Introduction}
Throughout this note let $\Omega\subseteq\CEP{n}$, $n\geq 1$, be a bounded, connected, open, and hyperconvex set. By
$\Eo$ we denote the family of all bounded plurisubharmonic functions $\varphi$ defined on $\Omega$ such that
\[
\lim_{z\to\xi} \varphi (z)=0\;\; \text{ for every }\;\; \xi\in\partial\Omega\, ,  \;\;\text{ and }\;\; \int_{\Omega} \ddcn{\varphi}<
\infty\, ,
\]
where $\ddcn{\,\cdot\,}$ is the complex Monge-Amp\`{e}re operator. Next let $\Ep$, $p>0$, denote the family of plurisubharmonic functions $u$ defined on $\Omega$ such that there exists a
decreasing sequence $\{u_j\}$, $u_j\in\Eo$, that
converges pointwise to $u$ on $\Omega$, as $j$ tends to $+\infty$, and
\[
\sup_{j\geq 1}\int_{\Omega}(-u_j)^p\ddcn{u_j}=\sup_{j\geq 1} e_p(u_j)< \infty\, .
\]
If $u\in\Ep$, then $e_p(u)<\infty$ (\cite{cegrell_pc,cegrell_sub}). It should be noted that
it follows from~\cite{cegrell_pc} that the complex Monge-Amp\`{e}re operator is well-defined on
$\Ep$. It is not only within pluripotential theory these cones have been proven useful, but also as a tool in dynamical
systems and algebraic geometry (see e.g.~\cite{cegrell_hiep_etc,diller}). For further information on pluripotential theory we refer to~\cite{czyz,klimek,kolo_mem}.

The  purpose of this paper is to give a new proof of Theorem B below and use Theorem B to prove (2)  implies (1) the following theorem:

\bigskip

\noindent {\bf Theorem A (Dirichlet's problem).} \emph{Let $\mu$ be a non-negative Radon measure, then the following conditions are equivalent:}
\begin{enumerate}

\item there exists a function $u\in\E_1$ such that $\ddcn{u}=\mu$,

\item there exists a constant $B>0$, such that
\begin{equation}\label{eq2}
\int_{\Omega}(-\varphi)\, d\mu\leq B\, e_1(\varphi)^{\frac1{n+1}}\text{ for all } \varphi\in\E_1\, ,
\end{equation}

\item the class $\E_1$ is contained in $L^1(\mu)$,

\item the class $\E_1$ is contained in $L^1(\mu)$. Furthermore, for any sequence $\{v_j\}\subset\E_1$ such that
$e_1(v_j)\leq 1$, there exists a subsequence $\{v_{j_k}\}$  convergent in the $L^1(\mu)$ topology.

\end{enumerate}

\bigskip

Originally, it was proved by the second author in ~\cite{cegrell_pc}, that the two first conditions in Theorem A are equivalent. This gives a  complete characterization of measures for which there exist a solution of the Dirichlet problem for the complex Monge-Amp\`{e}re operator in the class $\E_1$.

Before we continue we need some more notation. We say that a non-negative Radon measure $\mu$ belongs to $\mathcal M_1$ if there exists constant $A$ such that
\[
\int_{\Omega}(-u)\;d\mu\leq A\, e_1(u)^{\frac{1}{n+1}}\, ,
\]
holds for all $u\in \E_1$. Let the functional $\J:\E_1\to \RE$ be defined by
\[
\J(u)=\frac {1}{n+1}\int_{\Omega}(-u)(dd^cu)^n+\int_{\Omega}ud\mu=\frac {1}{n+1} e_1(u)-\|u\|_1\, .
\]

\bigskip

\noindent {\bf Theorem B (Dirichlet's principle).} \emph{Let $\mu\in \mathcal M_1$, and $u\in \E_1$. Then the following
assertions are equivalent}
\begin{enumerate}
\item $\displaystyle\ddcn{u}=d\mu$,
\item $\displaystyle\J(u)=\inf_{w\in\mathcal{E}_1}\J (w)$.
\end{enumerate}

\bigskip

Theorem~B above gives a characterization of solutions $u$ of the Dirichlet problem $(dd^cu)^n=\mu$ as a minimizing functions for the functional $\J$ defined by the measure $\mu$.
This theorem was first proved by Bedford and Taylor in~\cite{bt_var1,bt_var2} for the homogeneous Monge-Amp\`{e}re equation in the class of locally bounded plurisubharmonic functions.  Later Kalina proved the Dirichlet principle
in~\cite{kalina} under some additional assumptions on $\mu$ and $u$. Using that  the first two conditions in
Theorem~A are equivalent, Persson~\cite{persson}, proved this Dirichlet principle in $\E_1$.
Here, we prove Theorem B without using Theorem A.

In the process of writing this note we have not only been inspired
by Bedford's and Taylor's, Kalina's and Persson's pioneer work, but also of the recent work by Berman et al.~\cite{berman_etc}. The authors would also like to express their gratitude to Robert Berman and Sebastien Boucksom for valuable discussions and comments  on an earlier version of this paper.

\section{Preliminaries}

\begin{theorem}\label{thm_holder2} Let $p>0$, and $n\geq 2$. Then there exists a constant $D(n,p)\geq
1$, depending only on $n$ and $p$, such that for any $u_0,u_1,\ldots , u_n\in\Ep$ it holds that
\begin{multline*}
\int_\Omega (-u_0)^p dd^c u_1\wedge\cdots\wedge dd^c u_n \\ \leq D(n,p)\;
e_p(u_0)^{p/(p+n)}e_p(u_1)^{1/(n+p)}\cdots e_p(u_n)^{1/(n+p)}\, .
\end{multline*}
Furthermore, $D(n,1)=1$ and $D(n,p)>1$ for $p\neq 1$.
\end{theorem}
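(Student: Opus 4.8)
The plan is to prove the Hölder-type inequality by induction, using the comparison with the "diagonal" energy $e_p(u) = \int(-u)^p(dd^cu)^n$ as the base case and then swapping one factor at a time. First I would recall the known integration-by-parts machinery for $\Ep$ from \cite{cegrell_pc,cegrell_sub}: for $u,v\in\Ep$ and a positive closed current $T = dd^cw_1\wedge\cdots\wedge dd^cw_{n-1}$ of the appropriate bidegree built from functions in $\Ep$, one has identities of the form $\int(-u)^p\,dd^cv\wedge T \leq C\,(\int(-u)^p\,dd^cu\wedge T)^{\alpha}(\int(-v)^p\,dd^cv\wedge T)^{\beta}$ with exponents summing to $1$; this is where the constant first departs from $1$ when $p\neq 1$, because the relevant convexity/Young-inequality step is sharp only for $p=1$ (integration by parts is exact, no loss). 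For $p=1$ the chain of integrations by parts is an equality at each stage, which is exactly why $D(n,1)=1$.

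The induction would be on the number of factors that differ from $u_0$. More precisely, I would prove by descending steps: starting from $\int(-u_0)^p(dd^cu_0)^n = e_p(u_0)$, estimate $\int(-u_0)^p\,dd^cu_1\wedge(dd^cu_0)^{n-1}$ against a product of $e_p(u_0)$ and $e_p(u_1)$ with exponents $\tfrac{n}{n+p}$ and, wait — one needs to be careful: the correct bookkeeping is that replacing $u_0$ by $u_1$ in one slot costs a factor $e_p(u_0)^{-1/(n+p)}e_p(u_1)^{1/(n+p)}$ relative to the symmetric target. So at each step I replace one copy of $dd^cu_0$ by $dd^cu_j$, apply the two-term mixed inequality (Cauchy–Schwarz / Hölder after integration by parts, raised to the right power), and collect the constants. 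After $n$ such steps every $dd^cu_0$ has been replaced, leaving $\int(-u_0)^p\,dd^cu_1\wedge\cdots\wedge dd^cu_n$ bounded by $e_p(u_0)^{p/(p+n)}\prod_{j=1}^n e_p(u_j)^{1/(n+p)}$, with $D(n,p)$ the product of the step constants.

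The main obstacle is controlling the constant sharply enough to get $D(n,1)=1$ and, conversely, to see that $D(n,p)>1$ is genuinely needed for $p\neq 1$. For the first part I would check that when $p=1$ every inequality used is an identity: $\int(-u_0)\,dd^cu_1\wedge T = \int(-u_1)\,dd^cu_0\wedge T$ by symmetry of integration by parts, and the Cauchy–Schwarz step $\int(-u_0)\,dd^cu_0\wedge T' $-type bound becomes an equality case, so no constant is introduced. For $p\neq 1$ the term $(-u_0)^p$ does not integrate by parts cleanly, and one must pass through an inequality $(-u_0)^p \leq$ (something) or use Hölder on $(-u_0)^{p} = (-u_0)^{p-1}\cdot(-u_0)$, which forces a strictly-larger-than-one factor; exhibiting a simple example (e.g. on the unit ball with explicit radial functions, or even $n=1$ adapted) where the inequality with constant $1$ fails would pin down $D(n,p)>1$. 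A secondary technical point is ensuring all the intermediate mixed energies $\int(-u_i)^p\,dd^cu_{j_1}\wedge\cdots$ are finite so the integrations by parts are legitimate; this follows from the finite-energy hypothesis $u_i\in\Ep$ together with the already-cited fact that the Monge–Ampère operator and these mixed integrals are well-defined and finite on $\Ep$, via approximation by the decreasing sequences in $\Eo$ defining membership in $\Ep$.
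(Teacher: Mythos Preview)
The paper does not actually prove this statement: its entire proof reads ``See Theorem~3.4 in~\cite{persson} (see also~\cite{czyz_ineq,czyz_energy,cegrell_pc,cegr_pers}).'' So there is nothing to compare your argument against within the paper itself; the content lives in the cited references. Your inductive scheme---replace one $dd^cu_0$ at a time by $dd^cu_j$ using a two-function mixed estimate obtained from integration by parts and Cauchy--Schwarz/H\"older---is indeed the strategy used in~\cite{cegrell_pc,cegr_pers,persson}, so in spirit you are on the right track.

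That said, two points in your sketch are not right as stated. First, for $p=1$ it is \emph{not} the case that ``every inequality used is an identity''. The basic step
\[
\int_\Omega (-u_0)\,dd^cu_1\wedge T \;\leq\; \Big(\int_\Omega (-u_0)\,dd^cu_0\wedge T\Big)^{1/2}\Big(\int_\Omega (-u_1)\,dd^cu_1\wedge T\Big)^{1/2}
\]
comes from the genuine inequality of Cauchy--Schwarz applied to $\int du_0\wedge d^cu_1\wedge T$ after integration by parts; it is not an equality in general. The reason $D(n,1)=1$ is simply that each such step carries constant $1$ and the exponents telescope correctly under iteration, not that the steps are identities. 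Second, your argument for $D(n,p)>1$ when $p\neq 1$ is a gap: observing that the proof ``loses'' something only shows that \emph{this particular proof} produces a constant $>1$, not that the optimal constant exceeds $1$. What is actually required---and what~\cite{czyz_ineq} does---is to exhibit explicit functions (radial examples on the ball, with an inequality for the Beta function) for which the inequality with constant $1$ fails. Your suggestion to look at ``$n=1$ adapted'' is also off, since the statement assumes $n\geq 2$. Finally, the key two-term estimate you invoke with unspecified exponents $\alpha,\beta$ is doing all the work; for $p\neq 1$ writing it down precisely and tracking the resulting constant through the induction is the actual content of the proof, and your sketch stops short of that.
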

\begin{proof}
See Theorem~3.4 in~\cite{persson} (see also~\cite{czyz_ineq,czyz_energy,cegrell_pc,cegr_pers}).
\end{proof}
It was proved in~\cite{czyz_ineq} (see also~\cite{czyz_modul}) that for
$p\neq 1$ the constant $D(n,p)$ in Theorem~\ref{thm_holder2} is strictly great than $1$. For this
reason we can not use similar variational method to prove the
Dirichlet principle in the class $\Ep$ when $p\neq 1$.

\begin{lemma}\label{coer} For all $u, v \in\E_1$ we have that
\[
\, e_1(u+v)^{\frac{1}{n+1}} \leq \, e_1(u)^{\frac{1}{n+1}} + \, e_1(v)^{\frac{1}{n+1}}\, .
\]
Furthermore, if $\mu\in \mathcal M_1$, then $\J$ is convex, and if $\|u_j\|_1\to \infty$, then $\J(u_j)\to \infty$.
\end{lemma}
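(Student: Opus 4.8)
The plan is to establish the three assertions in order, since the convexity of $\J$ follows from the triangle inequality for $e_1^{1/(n+1)}$, and coercivity then follows by combining convexity with a standard estimate on homogeneous rescalings.

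First I would prove the triangle inequality $e_1(u+v)^{1/(n+1)}\leq e_1(u)^{1/(n+1)}+e_1(v)^{1/(n+1)}$. Expanding $(dd^c(u+v))^n=\sum_{k=0}^n\binom{n}{k}(dd^cu)^k\wedge(dd^cv)^{n-k}$ and using $-u-v\leq -u$ where convenient, one reduces to controlling each mixed term $\int_\Omega(-u-v)(dd^cu)^k\wedge(dd^cv)^{n-k}$. I would apply Theorem~\ref{thm_holder2} with $p=1$ (where $D(n,1)=1$) to each such term, bounding $\int_\Omega(-u-v)(dd^cu)^k\wedge(dd^cv)^{n-k}\leq e_1(u+v)^{1/(n+1)}e_1(u)^{k/(n+1)}e_1(v)^{(n-k)/(n+1)}$; in fact it is cleaner to split $\int(-u-v)=\int(-u)+\int(-v)$ first and apply the inequality to the two resulting families of terms, obtaining after summation $e_1(u+v)\leq e_1(u)^{1/(n+1)}(e_1(u)^{1/(n+1)}+e_1(v)^{1/(n+1)})^n+e_1(v)^{1/(n+1)}(e_1(u)^{1/(n+1)}+e_1(v)^{1/(n+1)})^n$, which rearranges to the claimed inequality. (For $n=1$ this is just the parallelogram-type identity and is immediate; alternatively one treats $n=1$ by hand since Theorem~\ref{thm_holder2} is stated for $n\geq 2$.)

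Next, convexity of $\J$ when $\mu\in\mathcal M_1$: the linear term $\int_\Omega u\,d\mu$ is affine, so it suffices to show $u\mapsto e_1(u)$ is convex on $\E_1$. Writing $F(u)=e_1(u)^{1/(n+1)}$, the triangle inequality together with homogeneity $F(tu)=tF(u)$ for $t\geq 0$ shows $F$ is a (nonnegative, positively homogeneous, subadditive) convex functional; hence $e_1=F^{n+1}$ is convex as the composition of the convex increasing function $t\mapsto t^{n+1}$ on $[0,\infty)$ with the convex $F$. Therefore $\J$ is convex.

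Finally, coercivity: suppose $\|u_j\|_1\to\infty$. By Theorem~A's hypothesis class, or rather directly from $\mu\in\mathcal M_1$, we have $\|u_j\|_1=\int_\Omega(-u_j)\,d\mu\leq A\,e_1(u_j)^{1/(n+1)}$, so $e_1(u_j)^{1/(n+1)}\geq \|u_j\|_1/A\to\infty$. Then
\[
\J(u_j)=\frac{1}{n+1}e_1(u_j)-\|u_j\|_1\geq \frac{1}{n+1}\Big(\frac{\|u_j\|_1}{A}\Big)^{n+1}-\|u_j\|_1,
\]
and since $n+1\geq 2$ the right-hand side tends to $+\infty$ as $\|u_j\|_1\to\infty$. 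This completes the proof. The main obstacle is the bookkeeping in the triangle inequality: one must apply Theorem~\ref{thm_holder2} carefully to the $2^n$ mixed Monge-Ampère terms and recombine them into a clean binomial expression, and also dispatch the excluded case $n=1$ separately.
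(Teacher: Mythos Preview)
Your proof is correct and uses the same ingredients as the paper: Theorem~\ref{thm_holder2} for the triangle inequality, positive homogeneity plus subadditivity for convexity, and the defining inequality of $\mathcal M_1$ for coercivity.

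The one notable difference is in the triangle inequality step. You expand both $-(u+v)$ and $(dd^c(u+v))^n$ binomially and apply Theorem~\ref{thm_holder2} to each of the $2(n+1)$ mixed terms before recombining. The paper avoids this bookkeeping: it splits only the outer factor,
\[
e_1(u+v)=\int_\Omega(-u)(dd^c(u+v))^n+\int_\Omega(-v)(dd^c(u+v))^n,
\]
and applies Theorem~\ref{thm_holder2} once to each summand (with $u_0=u$ or $u_0=v$ and $u_1=\cdots=u_n=u+v$), obtaining directly
\[
e_1(u+v)\leq e_1(u)^{\frac{1}{n+1}}e_1(u+v)^{\frac{n}{n+1}}+e_1(v)^{\frac{1}{n+1}}e_1(u+v)^{\frac{n}{n+1}},
\]
from which the result follows after dividing through. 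This gives the same conclusion with no binomial expansion and no recombination. On the other hand, your treatment of convexity is more explicit than the paper's (which passes from convexity of $e_1^{1/(n+1)}$ to convexity of $\J$ in one sentence), and you correctly flag that Theorem~\ref{thm_holder2} is stated for $n\geq 2$ so the case $n=1$ needs a separate (elementary) verification.
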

\begin{proof} The first statement, triangular type inequality for $ e_1(u)^{\frac{1}{n+1}}$, follows from Theorem~\ref{thm_holder2} since
\[
e_1(u+v) \leq  e_1(u)^{\frac{1}{n+1}}\, e_1(u+v)^{\frac{n}{n+1}} +
 e_1(v)^{\frac{1}{n+1}}\, e_1(u+v)^{\frac{n}{n+1}}\, .
\]
In particular, $\ e_1(u)^{\frac{1}{n+1}}$ is convex and so is $\J$, under the assumption that $\mu\in \mathcal M_1$.
From the definition of $\mathcal M_1$ it follows that there exists constant $A>0$ such that
\[
\|u\|_1\leq Ae_1(u)^{\frac{1}{1+n}}\, ,\qquad \text{for all } u\in \E_1\, .
\]
If $\|u_j\|_1\to \infty$, then $e_1(u_j)\to \infty$, and therefore we get that
\[
\J(u_j)=\frac {1}{n+1}e_1(u_j)-\|u_j\|_1\geq \frac {1}{n+1}e_1(u_j)-Ae_1(u_j)^{\frac {1}{n+1}}\to \infty\, .
\]
This completes this proof.
\end{proof}
\begin{lemma}\label{lem_intineq} Let $v, u \in  \E_1 (\Omega)$, and $w \in \E_1\cap C(\Omega)$.  Then
\begin{equation}\label{lem_intineq1}
\int_{\{w<u \}} (-v)(dd^cu)^n \leq \int\limits_{\{w < u \}}
(-v)(dd^cw)^n\, .
\end{equation}
\end{lemma}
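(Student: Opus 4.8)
The plan is to reduce to the classical comparison principle for the complex Monge–Ampère operator by a standard approximation/truncation argument. First I would recall the comparison principle in the form: if $u,w\in\E_1$ (or more generally in the domain of definition of the operator) with $w\le u$, then on any open set where the inequality is available one has $\ddcn{u}\le\ddcn{w}$ as measures on $\{w<u\}$; more precisely, for $u,w$ in the class $\E_0$ (or $\E_0\cap C(\overline\Omega)$), $\int_{\{w<u\}}\ddcn{u}\le\int_{\{w<u\}}\ddcn{w}$. The continuity of $w$ is what makes $\{w<u\}$ a genuinely open set on which $w$ is a "smaller" competitor, so the set is well-behaved and we may localize. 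Since $-v\ge 0$, it suffices to prove the measure inequality $\mathbf{1}_{\{w<u\}}\ddcn{u}\le\mathbf{1}_{\{w<u\}}\ddcn{w}$ and then integrate $-v$ against it; the integrability of $-v$ against these measures is guaranteed because $v,u,w\in\E_1$ and Theorem~\ref{thm_holder2} (with $p=1$) gives the needed finiteness.

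The key steps, in order, would be: (1) Reduce to $v,u\in\E_0$ by taking the standard decreasing approximating sequences $u_j,v_j\in\E_0$ with $u_j\downarrow u$, $v_j\downarrow v$; since $w$ is continuous, $\{w<u_j\}$ decreases to $\{w<u\}$ (up to a set where $w=u$, which is negligible for the convergence of the Monge–Ampère measures by the continuity of $w$), and one passes to the limit using the convergence theorems for the complex Monge–Ampère operator along decreasing sequences in $\E_0$ (Cegrell). (2) For $u,v\in\E_0$ and $w\in\E_0\cap C(\Omega)$ (after also truncating $w$ or noting it is already well inside), invoke the comparison principle: on the open set $U:=\{w<u\}$ we have $u\ge w$, hence $\ddcn{u}\le\ddcn{w}$ on $U$ in the sense of measures — this is exactly the classical Bedford–Taylor/Cegrell comparison estimate localized to $U$. (3) Multiply by the nonnegative function $-v$ and integrate over $U$ to get \eqref{lem_intineq1} in the $\E_0$ case. (4) Undo the approximation: let $v_j\downarrow v$ and use that $(-v_j)\uparrow(-v)$ together with the weak convergence of $(dd^c u_j)^n$ and $(dd^c w)^n$ (the latter being constant) and monotone convergence; the uniform energy bounds from Theorem~\ref{thm_holder2} keep everything finite and justify the limit.

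The main obstacle I expect is the careful handling of the boundary behavior of the set $\{w<u\}$ under approximation — specifically, controlling the Monge–Ampère mass carried on $\{w=u\}$ and ensuring that the truncation $\{w<u_j\}\downarrow\{w<u\}$ does not lose or gain mass in the limit. This is precisely where the hypothesis $w\in C(\Omega)$ is essential: continuity of $w$ makes $\{w<u\}$ open and lets one use the standard fact that, for $w$ continuous, the measures $\mathbf{1}_{\{w<u_j\}}(dd^c u_j)^n$ converge to $\mathbf{1}_{\{w<u\}}(dd^c u)^n$. I would cite the relevant convergence and comparison results (Cegrell, \cite{cegrell_pc,cegrell_sub}; Bedford–Taylor) rather than reprove them, and the only genuine computation is the bookkeeping of which limits are monotone (the $-v_j$'s) and which are weak (the Monge–Ampère measures), combined via a standard lower-semicontinuity argument.
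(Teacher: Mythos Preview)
Your reduction rests on the claim that $\mathbf{1}_{\{w<u\}}\ddcn{u}\le\mathbf{1}_{\{w<u\}}\ddcn{w}$ as measures, so that integrating the nonnegative weight $-v$ against both sides finishes the proof. This measure inequality is \emph{false}. The Bedford--Taylor comparison principle only gives the integral inequality $\int_{\{w<u\}}\ddcn{u}\le\int_{\{w<u\}}\ddcn{w}$; it does not say that the Monge--Amp\`ere operator is antitone as a measure on the set $\{w<u\}$. A concrete counterexample on the unit ball in $\CEP{n}$, $n\ge 2$: take $w(z)=2(|z|^2-1)\in\Eo\cap C(\Omega)$ and $u(z)=\max(\log|z|,-1)\in\Eo$. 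Then the sphere $\{|z|=e^{-1}\}$ lies in $\{w<u\}$, the measure $\ddcn{u}$ is a nonzero singular measure carried by that sphere, while $\ddcn{w}$ is a multiple of Lebesgue measure; hence $\ddcn{u}\not\le\ddcn{w}$ on $\{w<u\}$. So step~(3) of your plan --- ``multiply by the nonnegative function $-v$ and integrate'' --- does not follow from step~(2). (Incidentally, $\{w<u\}$ is in general \emph{not} open: $u$ is only upper semicontinuous, so $u-w$ is upper semicontinuous and $\{u-w>0\}$ need not be open. The continuity of $w$ alone does not help here.)

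The paper's proof exploits the specific structure of the weight $-v$ rather than trying to compare the measures. The key identity is the plurifine locality $\mathbf{1}_{\{w<u\}}\ddcn{u}=\mathbf{1}_{\{w<u\}}\ddcn{\max(u,w)}$ (Lemma~5.4 in~\cite{cegrell_pc}); one then writes $\int_{\{w<u\}}(-v)\ddcn{\max(u,w)}$ as the global integral $\int_\Omega(-v)\ddcn{\max(u,w)}$ minus the piece over $\{w\ge u\}$, and uses the \emph{global} monotonicity $\int_\Omega(-v)\ddcn{\psi_1}\le\int_\Omega(-v)\ddcn{\psi_2}$ for $\psi_1\ge\psi_2$ in $\E_1$, which is an integration-by-parts fact that genuinely requires $v\in\E_1$ (not just $-v\ge0$). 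The contact set $\{u=w\}$ is then handled by the scaling trick $w\mapsto rw$, and only at the very end is $v$ approximated from $\E_1\cap C(\Omega)$. In short, the inequality is not a consequence of a local comparison of measures but of a global energy monotonicity combined with the $\max$-locality; your proposal is missing precisely this idea.
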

\begin{proof} Assume first that $v, w \in \E_1\cap C(\Omega)$. Without loss of generality we
can assume  that $\int_{\{ u=w\}} (-v)(dd^c w)^n =0$. The measure $(dd^cw)^n$ vanishes on pluripolar sets, and
therefore we have that
\[
 \int_{\{ u=rw\}} (-v)(dd^c w)^n =0\, ,
\]
except for at most denumerably many $r$. Lemma~5.4 in~\cite{cegrell_pc} yields that
\begin{multline*}
\int_{\{w<u \}} (-v)(dd^cu)^n = \int_{\{w < u \}}(-v)(dd^c\max(u,w))^n \\ =
\int_{\Omega} (-v)(dd^c \max(w,u))^n +  \int_{\{w \geq u \}}v(dd^c\max(u,w))^n\\ \leq
\int_{\Omega} (-v)(dd^c w)^n +  \int_{\{w > u \}}v(dd^c\max(u,w))^n \\=
\int\limits_{\{w<u\}} (-v)(dd^c w)^n + \int\limits_{\{ u=w\}} (-v)(dd^c w)^n\, .
\end{multline*}
Thus, inequality~(\ref{lem_intineq1}) holds if $v\in \E_1\cap C(\Omega)$. An approximation of  $v  \in \E_1 (\Omega)$
by a decreasing sequence in $ \E_1\cap C(\Omega)$ completes the proof (see e.g.~\cite{cegrell_gdm,cegrell_approx}).
\end{proof}
\begin{lemma}\label{lip}
Let $u,v\in \E_1$, and assume that $v$ is continuous. For $t < 0$, put
\[
P(u+tv)=\sup\{w\in \E_1:w\leq u+tv\}\, .
\]
Then $P(u+tv)\in \E_1$, and for $s<0$ we have that
\[
|P(u+tv)-P(u+sv)|\leq |t-s|(-v)\, .
\]
\end{lemma}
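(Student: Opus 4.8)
The plan is to first establish that $P(u+tv) \in \E_1$, and then prove the Lipschitz estimate by a two-sided comparison argument. For membership in $\E_1$: since $v$ is continuous and bounded (being in $\E_1$, it is bounded), there is a constant $M$ with $\|v\|_\infty \le M$, so for $t < 0$ we have $u + tv \ge u + tM$ (a translate of $u$ downward, but more to the point $u + tv \ge u - |t|M$). Thus $u + tv$ lies above a function in $\E_1$ — namely take the element of $\E_1$ dominating $u - |t|M$, using that $u \in \E_1$ and that $\E_1$ is closed under the operation of adding constants only up to truncation at $0$; more carefully, $u + tv \ge (1+\varepsilon)u$ near the boundary for suitable scaling, or simply observe $u + tv \ge u + tv^*$ where one bounds below by a bounded psh function with zero boundary values. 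The supremum $P(u+tv)$ (or rather its upper semicontinuous regularization, which by the standard Choquet-type argument equals it here since we are taking a sup of a family bounded above by the continuous-plus-$\E_1$ function $u+tv$) is then plurisubharmonic, lies in $\E_1$ by the sandwiching, and is the largest $\E_1$-minorant of $u+tv$.

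For the Lipschitz bound, fix $s, t < 0$ and assume without loss of generality $t \le s < 0$, so $t - s \le 0$ and $|t-s| = s - t$. The key observation is the pointwise inequality
\[
u + tv = (u + sv) + (t-s)v \ge (u+sv) + (t-s)(-v) \cdot(-1)\,,
\]
which I will organize as follows: since $(t-s) \le 0$ and $v \le 0$, we have $(t-s)v \ge 0$, hence actually $u + tv \ge u + sv$; and on the other side $u + tv = (u+sv) + (t-s)v \ge (u+sv) - |t-s|(-v)$ because $(t-s)v = -|t-s|v = |t-s|(-v) \ge 0$ — wait, this needs care. Let me instead use: $(u + sv) + (t-s)(-v)$ — here $(t-s) \le 0$ and $(-v)\ge 0$ so this term is $\le 0$, giving $(u+sv)+(t-s)(-v) \le u+sv$; and since $(t-s)(-v) = (s-t)v = \ldots$ In any case the clean statement is $|(u+tv)-(u+sv)| = |t-s|(-v)$ pointwise, an equality. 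Now $P(u+sv) - |t-s|(-v)$ is a candidate competitor: I claim it lies in $\E_1$ and is $\le u + tv$. For the domination, $P(u+sv) - |t-s|(-v) \le (u+sv) - |t-s|(-v) \le (u+sv) + (t-s)(-v)$... the upshot is $P(u+sv) + (t-s)(-v) \le u + tv$, hence by maximality $P(u+sv) + (t-s)(-v) \le P(u+tv)$, i.e. $P(u+sv) - P(u+tv) \le |t-s|(-v)$; the reverse inequality is symmetric, swapping $s$ and $t$. The membership $P(u+sv) + (t-s)(-v) \in \E_1$ follows since it is psh (a sum of the psh function $P(u+sv)$ and the psh function $(t-s)(-v) = (s-t)v$, noting $s - t \ge 0$ and $v$ psh), it is bounded, and it is sandwiched between two $\E_1$ functions.

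The main obstacle I anticipate is the careful bookkeeping of signs in the competitor argument, together with verifying that the operation $P(\cdot)$ of taking the largest $\E_1$-minorant genuinely lands in $\E_1$ and is plurisubharmonic (not merely that its usc regularization is). The plurisubharmonicity is the standard fact that the usc regularization of a sup of a psh family bounded above is psh, combined with a negligibility argument to remove the regularization; the $\E_1$ membership is guaranteed by the explicit bounded $\E_1$-minorant constructed above. Once $P(\cdot)$ is known to be a well-defined monotone, translation-compatible operator on $\E_1$ — monotone in the sense that $\phi \le \psi$ implies $P(\phi) \le P(\psi)$, and $P(\phi + c) $ behaves predictably — the Lipschitz estimate is immediate from the pointwise identity $|(u+tv) - (u+sv)| = |t-s|(-v)$ and the elementary fact that if $\phi \le \psi + g$ with $g \ge 0$ psh then $P(\phi) \le P(\psi) + g \le P(\psi + g)$, which I would isolate as the crux of the comparison.
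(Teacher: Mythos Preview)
Your approach is exactly the paper's, but the execution has two slips worth correcting.

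For membership in $\E_1$ you do not need $v$ bounded (and a continuous $v\in\E_1$ need not be bounded). Since $t<0$ and $v\le 0$ we have $tv\ge 0$, hence $u\le u+tv$; thus $u$ itself is an admissible competitor and $u\le P(u+tv)\le u+tv$. As $P(u+tv)$ is plurisubharmonic and dominates $u\in\E_1$, it lies in $\E_1$. This is the one-line argument in the paper; your detour through $\|v\|_\infty$ is unnecessary and rests on an unjustified boundedness claim.

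In the Lipschitz step with your ordering $t\le s<0$, monotonicity already gives $u+sv\le u+tv$ and hence $P(u+sv)\le P(u+tv)$. Your competitor $P(u+sv)+(t-s)(-v)=P(u+sv)+(s-t)v$ is indeed plurisubharmonic and $\le u+tv$, but the resulting bound $P(u+sv)-P(u+tv)\le |t-s|(-v)$ is then trivially true (the left side is $\le 0$), and your appeal to ``symmetry, swapping $s$ and $t$'' only reproduces the same triviality under $s\le t$. The nontrivial direction requires the competitor with the roles reversed: $P(u+tv)+(s-t)v$ is plurisubharmonic (since $s-t\ge 0$), lies in $\E_1$, and satisfies
\[
P(u+tv)+(s-t)v\le (u+tv)+(s-t)v=u+sv,
\]
so $P(u+tv)+(s-t)v\le P(u+sv)$, i.e.\ $P(u+tv)-P(u+sv)\le (s-t)(-v)=|t-s|(-v)$. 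Combined with $P(u+sv)\le P(u+tv)$ this gives the full estimate, which is precisely the paper's proof.
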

\begin{proof}
For $t<0$ the function $P(u+tv)$
is upper semicontinuous. Furthermore, $u\leq P(u+tv)\leq u+tv$, and therefore  $P(u+tv)\in \E_1$. For $s<t<0$, we
have that
\[
P(u+tv)\leq P(u+sv)\, , \;\;\text{ and }\;\;
P(u+sv)+(t-s)v\leq P(u+tv)\, .
\]
Thus, $|P(u+tv)-P(u+sv)|\leq |t-s|(-v)$.
\end{proof}
\begin{lemma}\label{comarison principle} Let $u,v\in \E_1$, and assume that $v$ is continuous.
For any $0\leq k \leq n,$
\begin{equation}\label{cp_1}
\lim_{t\nearrow 0}\int_{\Omega} \frac{P(u+tv) - tv -u}{t}(dd^cu)^k\wedge (dd^cP(u+tv))^{n-k} = 0\, .
\end{equation}
In particular
\begin{equation}\label{cp2}
\lim_{t\nearrow 0}\int_{\Omega} \frac{P(u+tv)  -u}{t}(dd^cu)^k\wedge (dd^cP(u+tv))^{n-k} = \int_{\Omega} v(dd^cu)^n\, .
\end{equation}
\end{lemma}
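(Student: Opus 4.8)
The plan is to establish \eqref{cp_1} first and then deduce \eqref{cp2} as an immediate consequence. Write $\varphi_t = P(u+tv)$ for $t<0$. From Lemma~\ref{lip} we know $\varphi_t \in \E_1$ and $u \le \varphi_t \le u+tv$, so the integrand
\[
\frac{\varphi_t - tv - u}{t}
\]
lies between $0$ and $-v \ge 0$ pointwise (dividing the inequality $0 \le \varphi_t - tv - u \le -tv$ by the negative number $t$ reverses it, giving $0 \le (\varphi_t - tv - u)/t \le -v$, wait---one must track the sign carefully; the correct bound is that this quotient is nonpositive and bounded below by $v$, i.e.\ $v \le (\varphi_t-tv-u)/t \le 0$). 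In particular the quotient is uniformly bounded by the fixed continuous function $(-v)$, which is $(dd^cu)^k\wedge(dd^c\varphi_t)^{n-k}$-integrable with a uniform bound, since by Lemma~\ref{coer}-type estimates and Theorem~\ref{thm_holder2}, $e_1(\varphi_t)$ is controlled by $e_1(u)$ and $e_1(u+tv) \le (e_1(u)^{1/(n+1)} + |t|\,e_1(v)^{1/(n+1)})^{n+1}$, so all mixed energies stay bounded as $t \nearrow 0$.

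The heart of the matter is to show that the quotient tends to $0$ in the relevant integral sense. The key geometric fact is that $P(u+tv)$ coincides with $u+tv$ off the contact set, and more precisely the Monge-Amp\`ere-type measures involved are supported where $\varphi_t = u + tv$. I would argue as follows: on the set $\{\varphi_t < u+tv\}$ the function $\varphi_t$ is maximal, hence the relevant current vanishes there in the appropriate sense; this is the standard fact that $P$ of a function is a solution to an obstacle problem, so $(dd^c\varphi_t)^n$ is carried by the contact set $\{\varphi_t = u+tv\}$, and similarly the mixed expressions $(dd^cu)^k \wedge (dd^c\varphi_t)^{n-k}$ put no mass on $\{\varphi_t < u+tv\}$ when $k<n$; for $k=n$ one uses Lemma~\ref{lem_intineq} with the continuous obstacle to compare $(dd^cu)^n$ against $(dd^c w)^n$ for $w$ a continuous approximant. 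On the contact set the integrand $\frac{\varphi_t - tv - u}{t} = \frac{(u+tv) - tv - u}{t} = 0$ exactly. Therefore the integral in \eqref{cp_1} equals the integral over $\{\varphi_t < u+tv\}$ of a quantity against a measure that charges only $\{\varphi_t = u+tv\}$, forcing it to be $0$ (or to tend to $0$ after the approximation argument controlling the error terms).

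To pass from \eqref{cp_1} to \eqref{cp2}, split
\[
\frac{\varphi_t - u}{t} = \frac{\varphi_t - tv - u}{t} + v\, ,
\]
integrate against $(dd^cu)^k \wedge (dd^c\varphi_t)^{n-k}$, apply \eqref{cp_1} to the first term, and for the second term observe that as $t \nearrow 0$ one has $\varphi_t \nearrow u$ (since $\varphi_t \le u$ is false---rather $u \le \varphi_t \le u+tv$ and $tv \to 0$, so $\varphi_t \to u$), hence by continuity of the mixed Monge-Amp\`ere operator along monotone sequences in $\E_1$ (Lemma~\ref{lip} gives the needed equicontinuity/convergence, and $v$ is continuous and bounded-ish, integrable with uniform bound), $\int_\Omega v\,(dd^cu)^k\wedge(dd^c\varphi_t)^{n-k} \to \int_\Omega v\,(dd^cu)^n$.

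The main obstacle I anticipate is the rigorous justification of two points: first, that $(dd^cu)^k\wedge(dd^c P(u+tv))^{n-k}$ genuinely vanishes on $\{P(u+tv)<u+tv\}$ for all $0\le k\le n$ (the case $k=n$ being the least standard and requiring Lemma~\ref{lem_intineq} together with a continuous approximation of the obstacle $u+tv$, since $v$ is continuous but $u$ need not be), and second, the interchange of limit and integral for the mixed Monge-Amp\`ere masses as $t\nearrow 0$, which needs the uniform energy bounds from Theorem~\ref{thm_holder2} plus the weak continuity of the operator along the monotone family $\{P(u+tv)\}_{t}$; the Lipschitz estimate of Lemma~\ref{lip} is exactly what makes this family well-behaved.
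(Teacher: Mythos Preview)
Your argument contains a genuine gap at its central step. You claim that the mixed currents $(dd^cu)^k\wedge(dd^c\varphi_t)^{n-k}$ put no mass on the non-contact set $\{\varphi_t<u+tv\}$ for $k<n$, because $\varphi_t$ is maximal there. This is false. Maximality of $\varphi_t$ on an open set $U$ gives $(dd^c\varphi_t)^n=0$ on $U$, but it does \emph{not} kill mixed products: already for $n=2$, if $\varphi=|z_1|^2$ and $u=|z_2|^2$ then $(dd^c\varphi)^2=0$ while $dd^cu\wedge dd^c\varphi\neq 0$. For $k=n$ the measure is $(dd^cu)^n$, which has no reason whatsoever to avoid the non-contact set. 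So for $0<k\le n$ your ``the integral is identically $0$ for each $t$'' argument collapses, and you are left with no mechanism to show the limit vanishes. (Incidentally, your sign bookkeeping is also off: since $-tv\le \varphi_t-tv-u\le 0$ and $t<0$, the quotient satisfies $0\le (\varphi_t-tv-u)/t\le -v$; it is nonnegative, not nonpositive.)

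The paper's proof circumvents this entirely by a decoupling trick. One checks that $h(t)=\frac{\varphi_t-tv-u}{t}$ is \emph{decreasing} in $t$, so for any fixed $s<t<0$ one has $h(t)\le h(s)$. This lets you freeze the integrand at $h(s)$ while sending $t\nearrow 0$ in the measure; since $\varphi_t\searrow u$, weak continuity gives
\[
\limsup_{t\nearrow 0}\int_\Omega h(t)\,(dd^cu)^k\wedge(dd^c\varphi_t)^{n-k}\ \le\ \int_\Omega h(s)\,(dd^cu)^n,
\]
which reduces \emph{every} $k$ to the single case $k=n$. One then bounds $\int_\Omega h(s)(dd^cu)^n\le \int_{\{\varphi_s<u+sv\}}(-v)(dd^cu)^n$, approximates $u$ by continuous $u_k\in\E_0$, and applies Lemma~\ref{lem_intineq} (with $w=P(u_k+sv)-sv$ continuous) together with the genuine obstacle fact $(dd^cP(u_k+sv))^n=0$ on $\{P(u_k+sv)<u_k+sv\}$ to get an $O(|s|)$ bound. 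Letting $s\nearrow 0$ finishes \eqref{cp_1}. Your derivation of \eqref{cp2} from \eqref{cp_1} is essentially the paper's and is fine.
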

\begin{proof}
Consider the  function $h(t)=\frac{P(u+tv)-tv-u}{t}$, for $t<0$. A straightforward calculation shows that $h$ is a decreasing function, and
\[
0\leq \frac{P(u+tv)-tv-u}{t} \leq -v\, .
\]
Hence, for fixed $s<0$ we have that
\begin{multline*}
\lim_{t\nearrow 0}\int_{\Omega} \frac{P(u+tv) - tv -u}{t}(dd^cu)^k\wedge (dd^cP(u+tv))^{n-k}\\ \leq
\lim_{t\nearrow 0}\int_{\Omega} \frac{P(u+sv) - sv -u}{s}(dd^cu)^k\wedge (dd^cP(u+tv))^{n-k}\\ =
\int_{\Omega} \frac{P(u+sv)-sv-u}{s}(dd^cu)^n \leq \int\limits_{\{P(u+sv) - sv<u \}}(-v) (dd^cu)^n\,
.
\end{multline*}
Let $u_k\in \E_0\cap C(\Omega)$ be a decreasing sequence that converges to $u$ such that
\[
\int\limits_{\{P(u+sv)-sv<u \}} (-v)(dd^cu)^n \leq 2\int\limits_{\{P(u_k+sv)-sv < u \}}
(-v)(dd^cu)^n\, .
\]
We can apply  Lemma~\ref{lem_intineq} and conclude that
\begin{multline*}
\int\limits_{\{P(u_k+sv) - sv < u \}}(-v) (dd^cu)^n\\ \leq
\int\limits_{\{P(u_k+sv) - sv < u_k \}}(-v) (dd^c(P(u_k+sv) - sv))^n \leq -s M \to 0\, , \qquad \text{as } s \to 0\, .
\end{multline*}
Here $M$ is a constant only depending on $n$, $\|v\|$, and $\int_{\Omega} v(dd^c(u+v))^n$. We have used that
\[
\int\limits_{\{P(u_k+sv)  < u_k + sv\}} (dd^c(P(u_k+sv)))^n =0\, .
\]
This is a consequence of Corollary 9.2 in~\cite{bt}. The equality (\ref{cp2}) is an  consequence of the equality (\ref{cp_1}).
The proof is complete.
\end{proof}
\begin{lemma}\label{dif_pos}
Let $u,v\in \E_1$, and assume that $v$ is continuous. For $t>0,$ we set
\[
f(t)=\int_{\Omega}-(u+tv)(dd^c(u+tv))^n=e_1(u+tv)\, .
\]
Then
\[
f'(0^+)=(n+1)\int_{\Omega}(-v)(dd^cu)^n\, .
\]
\end{lemma}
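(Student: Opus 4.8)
The plan is to expand $f$ as a polynomial in $t$ by multilinearity of the complex Monge-Amp\`ere operator, read off its linear coefficient, and then transform that coefficient by integration by parts.

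By Lemma~\ref{coer} we have $u+tv\in\E_1$ for every $t\geq 0$, so $f$ is well defined on $[0,\infty)$ with $f(0)=e_1(u)$. Using multilinearity of the mixed complex Monge-Amp\`ere operator on $\E$,
\[
\ddcn{(u+tv)}=\sum_{k=0}^{n}\binom{n}{k}t^{k}\,\ddck{u}{n-k}\wedge\ddck{v}{k}\, ,
\]
so that
\[
f(t)=\sum_{k=0}^{n}\binom{n}{k}t^{k}\int_{\Omega}(-u)\,\ddck{u}{n-k}\wedge\ddck{v}{k}+\sum_{k=0}^{n}\binom{n}{k}t^{k+1}\int_{\Omega}(-v)\,\ddck{u}{n-k}\wedge\ddck{v}{k}\, .
\]
The next point is that all these coefficients are finite: for $n\geq 2$ this follows from Theorem~\ref{thm_holder2} with $p=1$ (recall $D(n,1)=1$), applied with the function being integrated placed in the slot $u_{0}$ and the remaining copies of $u$ and $v$ distributed among $u_{1},\dots,u_{n}$, which bounds every mixed integral above by a product of powers of $e_1(u)$ and $e_1(v)$; for $n=1$ these are just the elementary mixed Dirichlet energies. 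Hence $f$ is a polynomial on $[0,\infty)$, in particular right-differentiable at $0$, and extracting the coefficient of $t$ gives
\[
f'(0^{+})=n\int_{\Omega}(-u)\,dd^{c}v\wedge\ddck{u}{n-1}+\int_{\Omega}(-v)\,\ddcn{u}\, .
\]

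It then remains to rewrite the first term. For $u,v\in\Eo$ Stokes' theorem gives the symmetry identity $\int_{\Omega}u\,dd^{c}v\wedge\ddck{u}{n-1}=\int_{\Omega}v\,dd^{c}u\wedge\ddck{u}{n-1}=\int_{\Omega}v\,\ddcn{u}$, hence $\int_{\Omega}(-u)\,dd^{c}v\wedge\ddck{u}{n-1}=\int_{\Omega}(-v)\,\ddcn{u}$. To pass to $u,v\in\E_1$ I would approximate $u$ and $v$ from above by decreasing sequences in $\Eo\cap C(\Omega)$ and invoke the convergence theorems for the Monge-Amp\`ere operator along decreasing sequences in $\E$, the uniform bounds of Theorem~\ref{thm_holder2} guaranteeing that the integrals converge to the expected limits. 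Substituting into the displayed formula yields
\[
f'(0^{+})=n\int_{\Omega}(-v)\,\ddcn{u}+\int_{\Omega}(-v)\,\ddcn{u}=(n+1)\int_{\Omega}(-v)\,\ddcn{u}\, ,
\]
which is the assertion.

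The binomial expansion and the term-by-term differentiation of the polynomial $f$ are routine. The one step demanding genuine care is the extension of the integration-by-parts identity from $\Eo$ to $\E_1$: one must verify that along the approximating sequences the integrals $\int_{\Omega}u_{j}\,dd^{c}v_{j}\wedge\ddck{u_{j}}{n-1}$ and $\int_{\Omega}v_{j}\,\ddcn{u_{j}}$ converge to the corresponding integrals for $u$ and $v$, and this is precisely where the energy estimates of Theorem~\ref{thm_holder2} are used.
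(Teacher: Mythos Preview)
Your argument is correct and is exactly the computation the paper has in mind: the paper's proof consists of the single sentence ``This is an immediate consequence of the construction,'' and what you have written is simply that construction spelled out --- multilinear (binomial) expansion of $(dd^c(u+tv))^n$, finiteness of the mixed energies via Theorem~\ref{thm_holder2}, and the integration-by-parts symmetry $\int_{\Omega}(-u)\,dd^cv\wedge(dd^cu)^{n-1}=\int_{\Omega}(-v)\,(dd^cu)^n$ on $\E_1$. There is nothing different in method, only in level of detail.
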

\begin{proof} This is an immediate consequence of the construction.
\end{proof}
\begin{lemma}\label{dif_neg}
Let $u,v\in \E_1$, and assume that $v$ is continuous. For $t<0$, we set
\[
f(t)=\int_{\Omega}(-P(u+tv))(dd^cP(u+tv))^n=e_1(P(u+tv))\, .
\]
Then
\[
f'(0^-)=(n+1)\int_{\Omega}(-v)(dd^cu)^n\, .
\]
\end{lemma}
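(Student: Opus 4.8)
The plan is to compute the one-sided derivative $f'(0^-)$ by expanding $e_1(P(u+tv))$ for $t<0$ and controlling each term using the Lipschitz estimate from Lemma~\ref{lip} together with the limit identity~(\ref{cp2}) from Lemma~\ref{comarison principle}. Write $u_t := P(u+tv)$ for brevity. Since $u\le u_t\le u+tv$ and $v$ is continuous, we have $u_t\in\E_1$ and $0\le u_t-u\le t v$ (recall $t<0$, so $tv\ge 0$), and from Lemma~\ref{lip} the family $\{u_t\}$ is Lipschitz in $t$ with constant $(-v)$. First I would write the difference quotient
\[
\frac{f(t)-f(0)}{t}=\frac{1}{t}\left(\int_\Omega(-u_t)(dd^cu_t)^n-\int_\Omega(-u)(dd^cu)^n\right),
\]
split $(-u_t)=(-u)+(u-u_t)$ in the first integral, and use the algebraic identity
\[
(dd^cu_t)^n-(dd^cu)^n=\sum_{k=0}^{n-1}dd^c(u_t-u)\wedge (dd^cu_t)^{k}\wedge(dd^cu)^{n-1-k},
\]
so that, after integrating by parts to move $dd^c(u_t-u)$ onto $(-u)$,
\[
\int_\Omega(-u)\bigl((dd^cu_t)^n-(dd^cu)^n\bigr)=\sum_{k=0}^{n-1}\int_\Omega(u-u_t)\,dd^cu\wedge(dd^cu_t)^{k}\wedge(dd^cu)^{n-1-k}.
\]

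The second step is to handle the remaining term $\int_\Omega(u-u_t)(dd^cu_t)^n$. Dividing by $t$ and letting $t\nearrow 0$, this converges to $\int_\Omega v\,(dd^cu)^n$ by~(\ref{cp2}) with $k=0$ (the integrability is guaranteed by $\mu\in\Mp$ only indirectly; here the relevant bound is the uniform control in the proof of Lemma~\ref{comarison principle}). Similarly, each of the $n$ summands $\frac1t\int_\Omega(u-u_t)\,dd^cu\wedge(dd^cu_t)^{k}\wedge(dd^cu)^{n-1-k}$ converges, as $t\nearrow 0$, to $\int_\Omega v\,(dd^cu)^n$: indeed $dd^cu\wedge(dd^cu_t)^{k}\wedge(dd^cu)^{n-1-k}=(dd^cu)^{n-k}\wedge(dd^cu_t)^{k}$ is exactly the kind of mixed mass appearing in~(\ref{cp2}), and the weak convergence $u_t\to u$ in $\E_1$ (monotone as $t\nearrow 0$) gives convergence of these mixed Monge-Amp\`ere measures against the bounded continuous-up-to-boundary weight obtained after approximating $v$. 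Adding the $n$ summands to the first term yields the total $(n+1)\int_\Omega v\,(dd^cu)^n=(n+1)\int_\Omega(-v)(dd^cu)^n$ after the sign bookkeeping (note $f$ decreases as $t\uparrow 0$ since $u_t\uparrow u$).

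The main obstacle I anticipate is the justification of passing to the limit in the mixed terms $\frac1t\int_\Omega(u-u_t)\,(dd^cu)^{n-k}\wedge(dd^cu_t)^{k}$ for $1\le k\le n-1$. Unlike the pure term, here both the integrand's weight $(u-u_t)/t$ and one of the currents $(dd^cu_t)^k$ depend on $t$; one must combine the monotone convergence $(u-u_t)/t\nearrow(-v)$ (pointwise, by the monotonicity of $h$ noted in the proof of Lemma~\ref{comarison principle}) with the convergence of the currents. I would do this exactly as in the proof of Lemma~\ref{comarison principle}: fix $s<0$, bound $(u-u_t)/t\le(u-u_s-sv+sv)/s$ from above to get a $t$-independent continuous weight, apply the convergence of mixed masses as $u_t$ decreases to $u$, then let $s\nearrow 0$ and invoke~(\ref{cp2}) directly — so in fact the lemma is little more than summing up $n+1$ applications of~(\ref{cp2}) with $k=0,1,\dots,n$ and the differentiation can be packaged as a direct consequence of Lemma~\ref{comarison principle}, which is presumably what ``an immediate consequence of the construction'' in the analogous Lemma~\ref{dif_pos} alludes to.
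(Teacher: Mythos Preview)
Your approach is essentially the paper's own proof: the paper simply records the telescoping identity
\[
\frac{f(t)-f(0)}{t}=\sum_{k=0}^{n}\int_{\Omega}\frac{u-P(u+tv)}{t}\,(dd^cu)^{k}\wedge(dd^cP(u+tv))^{n-k}
\]
and then applies~(\ref{cp2}) from Lemma~\ref{comarison principle} to each of the $n+1$ summands, exactly as you realize in your final sentence; your concern about the mixed terms is therefore unnecessary, since~(\ref{cp2}) is stated for all $0\le k\le n$. (One minor slip: as $t\nearrow 0$ one has $u_t\downarrow u$, not $u_t\uparrow u$, so $f(t)=e_1(u_t)$ is increasing and $f'(0^-)\ge 0$, consistent with the answer.)
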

\begin{proof}
Note that
\[
\begin{aligned}
&\frac 1t\left (\int_{\Omega}(-P(u+tv))(dd^cP(u+tv))^n-\int_{\Omega}(-u)(dd^cu)^n\right )=\\
&=\sum_{k=0}^n\int_{\Omega} \frac{u-P(u+tv)}{t}(dd^cu)^k\wedge (dd^cP(u+tv))^{n-k},
\end{aligned}
\]
and Lemma~\ref{comarison principle} completes the proof.
\end{proof}
\begin{corollary}\label{cor} Let $u,v\in \E_1$, and assume that $v$ is continuous. Then it holds that
\begin{multline*}
\J (P(u+tv))'(0^-)=\Bigg(\frac {1}{n+1}\int_{\Omega}(-P(u+tv))(dd^cP(u+tv))^n\\ +\int_{\Omega}P(u+tv)d\mu\Bigg)'(0^-) \geq
\int_{\Omega}(-v)(dd^cu)^n+\int_{\Omega}vd\mu\, .
\end{multline*}
\end{corollary}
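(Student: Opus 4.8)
The plan is to differentiate the two pieces of $\J(P(u+tv))$ separately as $t \nearrow 0$, combine them, and then bound each limit from below using the lemmas already established. Write $u_t := P(u+tv)$, so that $u_t \in \E_1$ by Lemma~\ref{lip}. The energy term is handled by Lemma~\ref{dif_neg}: since $t \mapsto e_1(u_t)$ has derivative $f'(0^-) = (n+1)\int_\Omega (-v)(dd^cu)^n$ at $0^-$, the term $\frac{1}{n+1} e_1(u_t)$ contributes exactly $\int_\Omega (-v)(dd^cu)^n$ to $\J(u_t)'(0^-)$. So the whole matter reduces to estimating the one-sided derivative of $t \mapsto \int_\Omega u_t \, d\mu$ at $0^-$ and showing it is at least $\int_\Omega v \, d\mu$.

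For the measure term, the key observation is a pointwise inequality. For $t<0$ we have $u \le u_t \le u + tv$, hence $\frac{u_t - u}{t} \ge v$ (dividing the inequality $u_t - u \le tv$ by the negative number $t$ reverses it). Moreover Lemma~\ref{lip} (with $s=0$, reading off $|u_t - u| \le |t|(-v)$) shows $\big|\frac{u_t-u}{t}\big| \le -v$, and $-v \in L^1(\mu)$ because $v \in \E_1 \subset L^1(\mu)$ by the hypothesis $\mu \in \M_1$. So the difference quotients $\frac{u_t - u}{t}$ are bounded below by the fixed $L^1(\mu)$ function $v$ and dominated in absolute value by $-v \in L^1(\mu)$. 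Therefore, applying Fatou's lemma (in the form for a sequence bounded below by an integrable function) along any sequence $t_j \nearrow 0$,
\[
\liminf_{t \nearrow 0} \int_\Omega \frac{u_t - u}{t}\, d\mu \ge \int_\Omega \liminf_{t \nearrow 0} \frac{u_t - u}{t}\, d\mu \ge \int_\Omega v\, d\mu\, ,
\]
where for the last step I use $\frac{u_t-u}{t} \ge v$ pointwise. (One can also note that $t \mapsto u_t$ is monotone in the appropriate sense, so $\frac{u_t-u}{t}$ is actually monotone in $t$ and monotone convergence applies, giving the cleaner statement that the limit exists; but $\liminf \ge$ is all that is needed for the stated inequality.)

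Adding the two contributions yields
\[
\J(u_t)'(0^-) \ge \int_\Omega (-v)(dd^cu)^n + \int_\Omega v\, d\mu\, ,
\]
which is the claim. The main obstacle is the interchange of limit and integral in the measure term: unlike the energy term, where Lemma~\ref{comarison principle} did the delicate work of controlling the mixed Monge--Amp\`ere masses, here one must be careful that the one-sided derivative of $\int u_t\, d\mu$ is not smaller than $\int v\, d\mu$ — equality need not hold, which is precisely why the corollary is stated as an inequality. The domination $|\frac{u_t-u}{t}| \le -v \in L^1(\mu)$ from Lemma~\ref{lip} together with the pointwise lower bound $\frac{u_t-u}{t}\ge v$ is exactly what makes Fatou applicable and salvages the inequality.
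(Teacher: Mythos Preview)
Your proof is correct and follows essentially the same route as the paper: Lemma~\ref{dif_neg} handles the energy term exactly, and for the measure term both arguments rest on the pointwise inequality $\frac{P(u+tv)-u}{t}\ge v$ for $t<0$. The paper is marginally more direct---it simply integrates that pointwise inequality to get $\int_\Omega\frac{P(u+tv)-u}{t}\,d\mu\ge\int_\Omega v\,d\mu$ for every $t<0$ and then passes to the limit (existence coming from the monotonicity you noted parenthetically), so Fatou is not actually needed.
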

\begin{proof}
The existence of $\J (P(u+tv))'(0^-)$ follows from Lemma~\ref{dif_neg} and the fact that the
function $t\to \frac {P(u+tv)-u}{t}$ is decreasing. For $t<0$
\[
\int_{\Omega}\frac{(P(u+tv) -u)}{t}d\mu=\int_{\Omega}\frac{(P(u+tv)-tv -u)}{t}d\mu +
\int_{\Omega}vd\mu \geq \int_{\Omega}vd\mu\, ,
\]
and the proof is finished by Lemma~\ref{dif_neg}.
\end{proof}

\section{Proof of the Theorem B}\label{priciple}

\begin{proof}
Let $\mu\in \mathcal M_1$, and $u\in \E_1$.

$(1)\Rightarrow(2)$: Assume that $(dd^cu)^n=d\mu$, and let $v\in \E_1$. Then by Theorem~\ref{thm_holder2}, and Young's inequality we get that
\[
\int_{\Omega}(-v)d\mu=\int_{\Omega}(-v)(dd^cu)^n\leq e_1(v)^{\frac {1}{n+1}}
e_1(u)^{\frac {n}{n+1}} \leq
\frac {1}{n+1}e_1(v) + \frac {n}{n+1}e_1(u)\, .
\]
Hence
\[
\J(v)=\frac {1}{n+1}e_1(v)+\int_{\Omega}vd\mu\geq -\frac {n}{n+1}e_1(u)=\J(u)\, .
\]
Thus,  $\J(u)=\inf_{w\in\mathcal{E}_1}\J (w)$.

$(2)\Rightarrow(1)$: Let $u\in \E_1$ be such that $\J(u)=\inf_{w\in\mathcal{E}_1}\J (w)$. Take an arbitrary function $v$ in  $\E_1\cap C(\Omega)$, and define
\[
g(t)=\frac {1}{n+1}\int_{\Omega}(-P(u+tv))(dd^cP(u+tv))^n+\int_{\Omega}P(u+tv)d\mu\, , t<0
\]
and $g(t)=\J(u+tv)$ for  $t\geq 0$. Since, by assumption, for all t, $g(0)\leq g(t)$, it follows
that $0 \geq g^\prime (0^-)$,  and $g^\prime(0^+)\geq 0$. The existence of $g'(0^+)$ and $g'(0^-)$
follows from Lemma~\ref{dif_pos} and Corollary~\ref{cor} respectively. The last inequality and
Lemma~\ref{dif_pos} gives us that
\[
\int_{\Omega}(-v)(dd^cu)^n+\int_{\Omega}vd\mu \geq 0\, ,
\]
and therefore it follows from Corollary~\ref{cor} that $g^\prime (0^-)=0$, and
\[
0=\int_{\Omega}(-v)(dd^cu)^n+\int_{\Omega}vd\mu\, .
\]
Thus,
\[
\int_{\Omega}vd\mu=\int_{\Omega}v(dd^cu)^n\, .
\]
Since $v$ was arbitrary, we conclude, by Lemma~3.1 in~\cite{cegrell_gdm}, that $(dd^cu)^n=d\mu$.
\end{proof}
\begin{remark}
The uniqueness of the solution for the equation $\ddcn{u}=d\mu$ follows from the comparison principle (see
 e.g.~\cite{czyz_cegrell_hiep,cegrell_pc}).  Using Lemma~\ref{coer}, uniqueness in $\E_1$ can  be obtained
in the following way.
\end{remark}
\begin{proposition}\label{prop}  For any $\mu\in \M_1$ there exists at most  one  function $u\in\E_1$ for which the
functional $\J$ achieves its infimum on $\E_1$. In other words, there exists at most one
solution $u\in\E_1$ for the complex Monge-Amp\`{e}re equation $\ddcn{u}=\mu$.
\end{proposition}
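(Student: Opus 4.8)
The plan is to prove uniqueness via the strict convexity of $\J$ that is already implicit in Lemma~\ref{coer}. Suppose $u_0, u_1 \in \E_1$ both minimize $\J$ over $\E_1$, with common minimal value $m = \inf_{w\in\E_1}\J(w)$. Set $u_t = (1-t)u_0 + tu_1$ for $t\in[0,1]$; since $\E_1$ is a convex cone, $u_t\in\E_1$. By Lemma~\ref{coer}, $\J$ is convex on $\E_1$, so $\J(u_t)\leq (1-t)\J(u_0) + t\J(u_1) = m$, and since $m$ is the infimum, $\J(u_t) = m$ for all $t\in[0,1]$. In particular the convex function $t\mapsto \J(u_t)$ is constant, hence affine, on $[0,1]$.

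Now I would exploit the only genuinely nonlinear term in $\J$, namely $\tfrac{1}{n+1}e_1(\cdot)$, since $u\mapsto\int_\Omega u\,d\mu$ is affine. Affineness of $t\mapsto\J(u_t)$ forces $t\mapsto e_1(u_t)$ to be affine on $[0,1]$, so equality must hold throughout the triangle-inequality chain used to prove Lemma~\ref{coer}; applied to $u_0$ and $u_1$ this gives $e_1(u_0+u_1)^{1/(n+1)} = e_1(u_0)^{1/(n+1)} + e_1(u_1)^{1/(n+1)}$, the equality case of the Minkowski-type inequality. The heart of the argument is then to extract from this equality case that $u_0$ and $u_1$ are "proportional" in the appropriate sense, and in the setting $e_1(u_0)=e_1(u_1)$ (which holds here, both minimizers having $\J(u_0)=\J(u_1)$ and $\|u_0\|_1=\|u_1\|_1$ by a separate short argument, or directly from affineness of the linear term being automatic) to conclude $u_0 = u_1$. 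Concretely, equality in the Hölder-type estimate of Theorem~\ref{thm_holder2} should force $dd^cu_0$ and $dd^cu_1$ to be mutually proportional as currents, and hence, after normalization, $(dd^cu_0)^n = (dd^cu_1)^n$; the comparison principle in $\E_1$ (see \cite{czyz_cegrell_hiep,cegrell_pc}) then yields $u_0=u_1$.

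The main obstacle is the last step: turning the equality case of the energy Minkowski inequality into genuine pointwise equality of the two minimizers, because Theorem~\ref{thm_holder2} is quoted as an inequality only, with no characterization of its equality case. I would sidestep this by a variational argument instead. Since every $u_t$ minimizes $\J$, the proof of Theorem~B $(2)\Rightarrow(1)$ applies verbatim to each $u_t$, giving $(dd^cu_t)^n = \mu$ for every $t\in[0,1]$. In particular $(dd^cu_0)^n = (dd^cu_1)^n = \mu$, and then the comparison principle for the complex Monge–Ampère operator on $\E_1$ gives $u_0 = u_1$ directly. This is clean, uses only results already in the paper (Theorem~B and the quoted comparison principle), and makes the role of Lemma~\ref{coer} transparent: convexity of $\J$ guarantees the whole segment $\{u_t\}$ consists of minimizers, after which Theorem~B converts each into a solution of the same Dirichlet problem. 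The one point to verify carefully is that the hypothesis "$v\in\E_1\cap C(\Omega)$ arbitrary" in the proof of $(2)\Rightarrow(1)$ is met — it is, since that proof only needs $u_t\in\E_1$ and $\mu\in\M_1$, both of which hold.
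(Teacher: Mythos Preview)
Your final ``sidestep'' argument is logically valid but bypasses the stated purpose of the proposition. The Remark immediately preceding Proposition~\ref{prop} already notes that uniqueness follows from the comparison principle; the proposition is expressly offered as an \emph{alternative} route via Lemma~\ref{coer}. Your argument collapses to: minimizers are solutions (Theorem~B), solutions are unique (comparison principle) --- which is exactly the content of the Remark, not of the Proposition. Note too that in your version the segment $\{u_t\}$ does no work: once you invoke the comparison principle you only need $t=0$ and $t=1$.

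The paper's proof starts as you do --- convexity of $\J$ forces every $u_t$ to be a minimizer, hence (by Theorem~B) a solution --- but then genuinely uses all $t$ at once. Expanding $(dd^c(tu_0+(1-t)u_1))^n=\mu$ binomially and combining with the inequality $\int(-\varphi)(dd^cu_0)^k\wedge(dd^cu_1)^{n-k}\le\int(-\varphi)\,d\mu$ for each $k$ forces every mixed measure $(dd^cu_0)^k\wedge(dd^cu_1)^{n-k}$ to equal $\mu$. From this the paper concludes $u_0=u_1$ directly: an induction via integration by parts and Cauchy--Schwarz gives $\int_\Omega d(u_0-u_1)\wedge d^c(u_0-u_1)\wedge(dd^c\psi)^{n-1}=0$ for a strictly plurisubharmonic exhaustion $\psi\in\Eo\cap C^\infty(\Omega)$, with no appeal to the comparison principle. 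Your abandoned first idea --- analysing the equality case of the energy inequalities --- is thus much closer in spirit to what the paper actually does.
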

\begin{proof} Let $S$ denotes the set of solutions of the Dirichlet problem for the measure
$\mu$. Then we know by Lemma~\ref{coer} that S is a convex set. Assume that there exist
functions $u,v\in S.$
Then also $tu+(1-t)v\in S$, for $0\leq t \leq 1$. We also
have that for any $1\leq k \leq n$, and all $\varphi\in\Eo$, it holds that
\begin{multline*}
\int_\Omega (-\varphi) (dd^c u)^k\wedge (dd^c v)^{n-k}\leq \left(\int_\Omega (-\varphi)\ddcn{u}\right)^{\frac{k}{n}}\left(\int_\Omega (-\varphi)\ddcn{v}\right)^{\frac{n-k}{n}}\\
 \leq \int_\Omega (-\varphi)d\mu\, .
\end{multline*}
This implies that for all $1 \leq k \leq n$ we have that
\begin{equation}\label{prop_1}
(-\varphi) (dd^cu)^k\wedge (dd^c v)^{n-k}=(-\varphi) d\mu\, ,
\end{equation}
since otherwise we would have that $tu+(1-t)v\not\in S$. From~(\ref{prop_1}) it follows that
\[
(dd^cu)^k\wedge (dd^c v)^{n-k}= d\mu\, .
\]
Now we can use an argument from the proof of Theorem~3.15 in~\cite{cegrell_bdd} to prove that $u =
v$. By~\cite{cegrell_approx}, there exists a strictly plurisubharmonic exhaustion function
$\psi\in\mathcal E_{0}\cap C^{\infty}(\Omega)$ for $\Omega$. It is enough to show that
\[
\int_{\Omega} d(u-v)\wedge d^c(u-v)\wedge(dd^c\psi)^{n-1} = 0.
\]
It is easy to see that
\[
0=\int_{\Omega} d(u-v)\wedge d^c(u-v)\wedge(dd^cu)^{a}\wedge(dd^cv)^b\wedge dd^c\psi,
\]
for $a+b=n-2$.

Assume that
\[
0=\int_{\Omega} d(u-v)\wedge d^c(u-v)\wedge(dd^cu)^{a}\wedge(dd^cv)^b\wedge (dd^c\psi)^p,
\]
for $a+b=n-1-p$. Then, for $ a+b=n-2-p$ we have
\begin{multline*}
0\leq \int_{\Omega} d(u-v)\wedge d^c(u-v)\wedge(dd^cu)^{a}\wedge(dd^cv)^b\wedge (dd^c\psi)^{p+1}\\ =
\int_{\Omega} -(u-v)dd^c(u-v)\wedge(dd^cu)^{a}\wedge(dd^cv)^b\wedge (dd^c\psi)^{p+1}\\ =
\int_{\Omega} -\psi (dd^c(u-v))^2\wedge(dd^cu)^{a}\wedge(dd^cv)^b\wedge (dd^c\psi)^{p}\\=
\int_{\Omega} d\psi \wedge d^c(u-v)\wedge dd^c(u-v)\wedge(dd^cu)^{a}\wedge(dd^cv)^b\wedge
(dd^c\psi)^{p}\\ \leq
\left|\int_{\Omega} d\psi \wedge d^c(u-v)\wedge dd^cu\wedge(dd^cu)^{a}\wedge(dd^cv)^b\wedge (dd^c\psi)^{p}\right|\\ +
\left|\int_{\Omega} d\psi \wedge d^c(u-v)\wedge dd^cv\wedge(dd^cu)^{a}\wedge(dd^cv)^b\wedge
(dd^c\psi)^{p}\right|\\ \leq
\Bigg(\int_{\Omega} d\psi \wedge d^c\psi\wedge(dd^cu)^{a+1}\wedge(dd^cv)^b\wedge (dd^c\psi)^{p}\times \\ \times
\int_{\Omega} d(u-v) \wedge d^c(u-v)\wedge(dd^cu)^{a+1}\wedge (dd^cv)^b\wedge
(dd^c\psi)^{p}\Bigg)^{\frac{1}{2}}\\+
\Bigg(\int_{\Omega} d\psi \wedge d^c\psi\wedge(dd^cu)^{a}\wedge(dd^cv)^{b+1}\wedge (dd^c\psi)^{p}\times \\ \times
\int_{\Omega} d(u-v) \wedge d^c(u-v)\wedge(dd^cu)^{a}\wedge(dd^cv)^{b+1}\wedge
(dd^c\psi)^p\Bigg)^{\frac{1}{2}} =0\, .
\end{multline*}
\end{proof}

\section{Proof of Theorem A}
 We begin with a lemma.

\begin{lemma}\label{convergence} Let $\mu$ be a non-negative Radon measure such that $\mu (\Omega)<+\infty$. If there exists a constant $A>0$ such that
\begin{equation}\label{eq}
\int_{\Omega}(-\varphi)^2\, d\mu\leq A\, e_1(\varphi)^{\frac2{n+1}}\text{ for all } \varphi\in\E_1\, ,
\end{equation}
then $\mu\in \M_1$. Furthermore, for any sequence $\{v_j\}\subset\E_1$ such that
$e_1(v_j)\leq 1$, there exists a subsequence $\{v_{j_k}\}$  convergent in the $L^1(\mu)$ topology.
Finally, there exists a uniquely determined function $u\in\mathcal E_1,$ with  $(dd^cu)^n = \mu.$

\end{lemma}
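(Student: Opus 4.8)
The plan is to prove the three assertions in turn, reducing the first and the third to Theorem~B and the compactness of $\E_1$, with the $L^1(\mu)$-compactness as the one genuinely delicate point.

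\smallskip\noindent\emph{That $\mu\in\M_1$} follows at once from the Cauchy--Schwarz inequality: for $u\in\E_1$,
\[
\int_\Omega(-u)\,d\mu\le\Big(\int_\Omega(-u)^2\,d\mu\Big)^{1/2}\mu(\Omega)^{1/2}\le A^{1/2}\mu(\Omega)^{1/2}\,e_1(u)^{\frac1{n+1}}\, .
\]
Hence Lemma~\ref{coer}, Theorem~B and Proposition~\ref{prop} apply to $\mu$, and $\mu$ carries no mass on pluripolar sets (a standard property of $\M_1$-measures, see~\cite{cegrell_pc}).

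\smallskip\noindent\emph{The compactness statement.} Let $\{v_j\}\subset\E_1$ with $e_1(v_j)\le1$. By Theorem~\ref{thm_holder2} (with $D(n,1)=1$), applied with $u_0=v_j$ and $u_1=\dots=u_n=\psi$ for a fixed strictly plurisubharmonic $\psi\in\Eo\cap C^\infty(\Omega)$, the numbers $\int_\Omega(-v_j)(dd^c\psi)^n$ are uniformly bounded, so $\{v_j\}$ is bounded in $L^1_{\mathrm{loc}}(\Omega)$; together with the energy bound and the compactness of $\E_1$ (see~\cite{cegrell_pc,persson}) this lets us pass to a subsequence, not relabelled, converging in capacity, hence in $L^1_{\mathrm{loc}}(\Omega)$, to some $v\in\E_1$ with $e_1(v)\le\liminf_j e_1(v_j)\le1$. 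Next, testing the hypothesis on $\varphi=t\,u^*_{E,\Omega}$, $t>0$, with $u^*_{E,\Omega}$ the relative extremal function of a compact set $E\Subset\Omega$ — so $u^*_{E,\Omega}\in\Eo$, $-1\le u^*_{E,\Omega}\le0$, $u^*_{E,\Omega}\le-1$ quasi-everywhere on $E$, and $\int_\Omega(dd^cu^*_{E,\Omega})^n=\mathrm{Cap}(E,\Omega)$, the relative Monge--Amp\`ere capacity — and using that $\mu$ ignores pluripolar sets, we get $t^2\mu(E)\le A\,e_1(\varphi)^{\frac2{n+1}}=A\,t^2 e_1(u^*_{E,\Omega})^{\frac2{n+1}}$; since $e_1(u^*_{E,\Omega})\le\mathrm{Cap}(E,\Omega)$, inner regularity yields
\[
\mu(E)\le A\,\mathrm{Cap}(E,\Omega)^{\frac2{n+1}}\qquad\text{for every Borel }E\Subset\Omega\, .
\]
Given $\delta>0$, for $K\Subset\Omega$ and $G_j:=\{|v_j-v|>\delta\}$ we then have $\mu(G_j)\le A\,\mathrm{Cap}(K\cap G_j,\Omega)^{2/(n+1)}+\mu(\Omega\setminus K)$, and choosing $K$ so that $\mu(\Omega\setminus K)$ is small and letting $j\to\infty$ (convergence in capacity) makes $\limsup_j\mu(G_j)$ arbitrarily small. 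Finally the hypothesis gives $\sup_j\|v_j\|_{L^2(\mu)}^2=\sup_j\int_\Omega(-v_j)^2\,d\mu\le A$, and likewise $\|v\|_{L^2(\mu)}^2\le A$, so by the Cauchy--Schwarz inequality on $G_j$,
\[
\int_\Omega|v_j-v|\,d\mu\le\delta\,\mu(\Omega)+2A^{1/2}\,\mu(G_j)^{1/2}\, ;
\]
letting $j\to\infty$ and then $\delta\to0$ shows $v_j\to v$ in $L^1(\mu)$ (and the $L^1(\mu)$-limit is the $L^1_{\mathrm{loc}}$-limit $v\in\E_1$).

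\smallskip\noindent\emph{Existence and uniqueness.} By Lemma~\ref{coer}, $\J$ is convex and, using $\|u\|_1\le Ae_1(u)^{1/(n+1)}$, bounded below on $\E_1$, with finite infimum $m$; along a minimizing sequence $\{u_j\}$ the same inequality forces $\sup_j e_1(u_j)<\infty$, and the previous paragraph (whose conclusion is insensitive to the precise energy bound) produces a subsequence $u_{j_k}$ converging in $L^1(\mu)$ and in $L^1_{\mathrm{loc}}(\Omega)$ to some $u\in\E_1$. Then $\|u_{j_k}\|_1\to\|u\|_1$ and $e_1(u)\le\liminf_k e_1(u_{j_k})$, so
\[
\J(u)=\frac{1}{n+1}e_1(u)-\|u\|_1\le\liminf_k\Big(\frac{1}{n+1}e_1(u_{j_k})-\|u_{j_k}\|_1\Big)=\liminf_k\J(u_{j_k})=m\, ,
\]
i.e. $\J$ attains its infimum at $u$; by Theorem~B, $\ddcn{u}=\mu$, and uniqueness is Proposition~\ref{prop}.

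\smallskip\noindent\emph{Where the difficulty lies.} Everything is routine except the $L^1(\mu)$-compactness: convergence in $L^1_{\mathrm{loc}}(\Omega)$ says nothing about a measure that may be carried by a Lebesgue-null set, so it must be lifted to the capacity level. The bound on $e_1$ is what turns $L^1_{\mathrm{loc}}$-compactness into compactness in capacity, and it is the \emph{quadratic} form of the hypothesis — strictly stronger than $\mu\in\M_1$ — that both makes $\mu$ dominated by a positive power of the capacity and supplies the uniform $L^2(\mu)$-bound; these are the two ingredients that promote convergence in capacity to convergence in $L^1(\mu)$.
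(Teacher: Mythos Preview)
Your argument for $\mu\in\M_1$ via Cauchy--Schwarz is exactly the paper's, and your existence step (minimising sequence, lower semicontinuity of $e_1$, Theorem~B, Proposition~\ref{prop}) is also essentially the paper's. The disagreement is in the compactness step, and there your proof has a real gap.

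You assert that a sequence $\{v_j\}\subset\E_1$ with $e_1(v_j)\le 1$ has a subsequence converging \emph{in capacity}, citing~\cite{cegrell_pc,persson}. Neither reference contains such a statement; what they give is $L^1_{\mathrm{loc}}$-compactness and lower semicontinuity of $e_1$. Bounded energy controls the capacity of deep sublevel sets $\{v_j<-s\}$ uniformly in $j$, but this does not by itself force $\operatorname{Cap}\bigl(K\cap\{v_j<v-\delta\}\bigr)\to 0$ for fixed small $\delta$. In the compact K\"ahler setting the analogous implication is known to \emph{fail}: one needs additionally $e_1(v_j)\to e_1(v)$ to get convergence in capacity, and there is no reason to expect the local picture to be better. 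Everything downstream in your argument---the neat bound $\mu(E)\le A\,\operatorname{Cap}(E,\Omega)^{2/(n+1)}$, the passage from capacity convergence to $\mu$-measure convergence, and the $L^2(\mu)$/Cauchy--Schwarz upgrade to $L^1(\mu)$---is correct, but it all rests on this unsupported step.

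The paper avoids capacity altogether. From the uniform bound $\int(-v_j)^2\,d\mu\le A$ it passes (Banach--Saks/Mazur) to convex combinations $w_j$ converging strongly in $L^2(\mu)$, and identifies the limit with $v=(\limsup v_j)^*$ by comparing the weak limit of $v_j\,d\mu$ with the distributional limit of $v_j$. One then upgrades to $L^1(\mu)$-convergence of the subsequence $v_j$ itself using that $\int v_j\,d\mu\to\int v\,d\mu$ together with the one-sided domination $v_j\le(\sup_{k\ge j}v_k)^*\downarrow v$. This route uses only $L^1_{\mathrm{loc}}$-compactness of psh functions and soft Hilbert-space facts; no capacity comparison is needed. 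If you want to rescue your approach you must supply an independent proof that bounded-$e_1$ sequences are precompact in capacity---a genuine (and separate) theorem, not a citation.
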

\begin{proof} Assume that $\mu$ is a non-negative Radon measure with $\mu (\Omega)<+\infty$, and take a
function $\varphi\in\E_1$. Then it follows from inequality~(\ref{eq}) that there exists a constant $A>0$ such that
\begin{multline}
\int_{\Omega} (-\varphi)\,d\mu\leq \left(\int_{\Omega}
(-\varphi)^2\,d\mu\right)^{1/2}\mu(\Omega)^{1/2} \leq A^{1/2}
e_1(\varphi)^{\frac1{n+1}}\mu(\Omega)^{1/2}\\ = C e_1(\varphi)^{\frac1{n+1}}<+\infty\, , \qquad \text{where } C=A^{1/2}\mu(\Omega)^{1/2}\,
\end{multline}
Thus, $\E_1\subseteq L^1(\mu)$.

Assume now that $\{v_j\}\subset\E_1$ is a sequence such that
\[
\sup_j e_1(v_j)\leq 1\, .
\]
We can then pick a subsequence, again denoted by $v_j$, convergent as distributions to $v\in\mathcal E_1$
and such that sequence  $\{v_j\,d\mu\}$ is weakly convergent to some measure $\nu$. Then we have that
by inequality~(\ref{eq}) that there exists a constant $A>0$ such that
\[
\int_{\Omega} (-v_j)^2\,d\mu\leq A e_1(v_j)^{\frac{2}{n+1}}\leq A\, .
\]
Thus, $v_j\in L^2(\mu)$. Therefore, there exists a finite convex combination of $v_j$, denote this by $w_j$,
such that $\{w_j\}\subset\E_1$ converges to some function $w\in L^2(\mu)$. Furthermore, $d\nu=w d\mu$. But $\{v_j\}$ is
weakly convergent w.r.t. Lebesgue measure to $(\limsup v_j)^*$, and therefore is $\{w_j\}$ is weak convergent to
$(\limsup w_j)^* =(\limsup v_j)^*$. Hence, $w=(\limsup v_j)^*$.

To complete the proof of the lemma,
we show that then there exists a minimizer. So by Theorem~B there exists a function $u\in\E_1$ such that $\ddcn{u}=\mu$. Let  $\{u_j\}$ be a sequence in $\E_1$ such that  $\lim_{j\to \infty}\J(u_j)=\inf_{w\in\mathcal{E}_1}\J (w)$.
Using  Lemma~\ref{coer} together with what we just have proved,
we  can pick a subsequence again denoted by $\{u_j\}$ and $ u\in\E_1$ such  that
\[
 \int_{\Omega}|u_j-u|d\mu\to 0\qquad \text{as }  j\to\infty\, .
 \]
Set $v_k=(\sup_{j\geq k}u_j)^*$, where $(w)^*$ denotes the upper semicontinuous regularization of $w$. Then it follows that $v_k\geq u_k$, which implies that $e_1(v_k)\leq e_1(u_k)$ (see e.g.~\cite{cegrell_pc} or Lemma~6.1 in~\cite{czyz_modul}). Thus, $v_k\in \E_1$. The decreasing sequence  $\{v_k\}$ converges to $u$, as $j\to \infty$, and $e_1(v_k)\to e_1(u)$, as $k\to\infty$.  The monotone convergence theorem implies that $\int_{\Omega}v_kd\mu \to \int_{\Omega}ud\mu$. Therefore, we have that
\[
e_1(u)=\lim_{j\to\infty}e_1(v_j)=\liminf_{j\to\infty}e_1(v_j)\leq \liminf_{j\to\infty}e_1(u_j)\, .
\]
Hence,
\[
\liminf_{j\to\infty}\J(u_j)=\liminf_{j\to\infty}e_1(u_j)+
\lim_{j\to\infty}\int_{\Omega}u_jd\mu\geq e_1(u)-\|u\|_1=\J(u)\, .
\]
Thus $u$ is a minimizer which completes the proof of the lemma.
\end{proof}
\begin{proof}[Proof of Theorem A]

{\bf (1) $\Rightarrow$ (2)} Assume that there exists a function $u\in\E_1$ such that $\ddcn{u}=\mu$, and take
$\varphi\in\E_1$. Then by Theorem~\ref{thm_holder2} it follows that
\[
\int_{\Omega} (-\varphi)\, d\mu=\int_{\Omega} (-\varphi)\ddcn{u}\leq e_1(\varphi)^{\frac1{n+1}}
e_1(u)^{\frac{n}{n+1}}\, .
\]
Hence, (2) holds with $B=e_1(u)^{\frac{n}{n+1}}$.

\medskip

{\bf (3) $\Rightarrow$ (2)} Assume that condition $(2)$ is not satisfied, then for all $j$ there
exists $u_j\in \E_1$ such that
\[
\int_{\Omega}(-u_j)\;d\mu\geq je_1(u_j)^{\frac{1}{1+n}}.
\]
Without loss of generality we can assume that $e_1(u_j)=1$. Let $v_j=\sum_{l=1}^j\frac {1}{l^2}u_l$,
then by Lemma~\ref{coer}
\[
e_1(v_j)^{\frac {1}{n+1}}\leq \sum _{l=1}^j\frac {1}{l^2}e_1(u_l)^{\frac {1}{n+1}}=\sum
_{l=1}^j\frac {1}{l^2},
\]
which implies that $v=\sum_{l=1}^{\infty}\frac {1}{l^2}u_l\in \E_1$. On the other hand
\[
\int_{\Omega}(-v)\,d\mu\geq \sum_{l=1}^{\infty}\frac {1}{l^2}\int_{\Omega}(-u_l)\,d\mu\geq
\sum_{l=1}^{\infty}\frac {l}{l^2}=\infty,
\]
so $v\notin L^1(\mu)$. This ends the proof.

\medskip

{\bf (2) $\Rightarrow$ (1)}  We follow here the proof of Theorem 5.1 in ~\cite{cegrell_pc}. Assume that $\mu$ is a non-negative Radon measure such that~(\ref{eq2}) holds. Assume first that $\mu$ is supported by a compact set $K\Subset\Omega$, and let $h_K$ denote the relative extremal function for $K$. Set
\[
\M=\left\{\nu\geq 0: \supp \nu\subset K,\;\int_{\Omega}(-\varphi)^2 d\nu\leq Ce_1(\varphi)^{\frac{2}{n+1}}\text{ for all } \varphi\in\E_1\right\}
\]
where $C> 2 e_1(h_K)^{\frac{n-1}{n+1}}$ is a fixed constant. For a compact set $L\subset K$ we have that
\[
h_K\leq h_L\, , \qquad \text{ and } e_1(h_L)\leq e_1(h_K)\, .
\]
Therefore, it follows that
\begin{multline*}
\int_{\Omega} (-\varphi)^2\ddcn{h_L}\leq 2\|h_L\|\int_{\Omega} (-\varphi)(dd^c\varphi)\wedge (dd^c h_L)^{n-1}\\ \leq
2\left(\int_{\Omega} (-\varphi)\ddcn{\varphi}\right)^{\frac{2}{n+1}}
\left(\int_{\Omega} (-h_L)\ddcn{h_L}\right)^{\frac{n-1}{n+1}}\\ \leq C e_1(\varphi)^{\frac{2}{n+1}}\text{ for all } \varphi\in\E_1\, .
\end{multline*}
Hence, for every compact set $L\subset K$ we have that $\ddcn{h_L}\in\M$.

Fix $\nu_0\in \M$ and define
\begin{multline*}
\M^\prime=\Bigg\{ \nu\geq 0: \nu(\Omega)=1,\; \supp \nu\subset K, \\
\int_{\Omega}(-\varphi)^2 d\nu\leq \left(\frac{C}{T} + \frac{C}{\nu_0(\Omega)}\right)
e_1(\varphi)^{\frac{2}{n+1}} \text{ for all } \varphi\in\E_1
\Bigg\}\, ,
\end{multline*}
where $T=\sup\{\nu(\Omega):\nu\in\M\}$. Then we have for $\nu\in \mathcal M$ that
\begin{multline*}
\int_{\Omega} (-\varphi)^2\frac{(T-\nu(\Omega))d\nu_0 +\nu_0(\Omega)d\nu)}{T\nu_0(\Omega)} \leq
\int_{\Omega} (-\varphi)^2d\nu_0\frac{T-\nu(\Omega)}{T\nu_0(\Omega)}+\frac1{T}\int_{\Omega} (-\varphi)^2d\nu\\ \leq
\left(C\frac{T-\nu(\Omega)}{T\nu_0(\Omega)} + \frac{C}{T}\right)e_1(\varphi)^{\frac{2}{n+1}}\leq
\left(\frac{C}{\nu_0(\Omega)} + \frac{C}{T}\right)e_1(\varphi)^{\frac{2}{n+1}}\qquad  \text{ for all } \varphi\in\E_1\, .
\end{multline*}
Hence,
\[
\frac{(T-\nu(\Omega))\nu_0 +\nu_0(\Omega)\nu)}{T\nu_0(\Omega)}\in\M^\prime \text{ for all } \nu\in\M\, .
\]
Thus, $\M^\prime$ is a convex and weak$^*$-compact set of probability measures. Then it follows
from~\cite{rain} that there exist a function $f\in L^1(\mu)$ and a measure $\nu\in\M^\prime$ such
that $\mu=f\,d\nu+\nu_s$, where $\nu_s$ is orthogonal to $\M^\prime$. Note that since
$(dd^ch_L)^n\in \mathcal M$, then all measures orthogonal to $\M^\prime$ must be supported on
pluripolar sets. Therefore $\nu_s\equiv 0$, since $\mu$ vanishes on pluripolar sets. Note also that by Lemma~\ref{convergence}, we know that for $\mu\in \M^\prime$ there exists a uniquely determined function $u\in \E_1$ such that $(dd^cu)^n=\mu$.

Set $\mu_j=\min(f,j)d\nu$. Since $\nu$ satisfies inequality~(\ref{eq}), then so do $\mu_j$. Therefore there
a unique $u_j\in\E_1$ with $\ddcn{u_j}=d\mu_j$. Since $\mu_j(\Omega)<+\infty$, we can use Theorem~4.5 in~\cite{cegrell_pc} to see that $\{u_j\}$ is a decreasing sequence that converges to a function $u\in\E_1$ with $\ddcn{u}=\mu$.

 Finally, if $\mu$ only satisfy~(\ref{eq2}), let $\{K_j\}$ be an increasing  sequence compact subsets of $\Omega$
 with union equal to $\Omega$ and set $\mu_j=\chi_{K_j}d\mu$. We can complete the proof as above.

\medskip

{\bf (1) $\Rightarrow$ (4)} Assume that there exists a function $u\in\E_1$ such that $\ddcn{u}=\mu$.
Let $\{v'_j\}\subset\E_1$ be a sequence with $e_1(v'_j)\leq 1$, and  $\{v'_j\}$ converges
as distributions and select a weak*-convergent subsequence of $\{v'_jd\mu\}$ denoted by $\{v_jd\mu\}$, converging to $d\nu$ for some measure $\nu\leq 0$. Let $\psi\in C^{\infty}_0(\Omega)$, $0\leq \psi\leq
1$, then $\psi\ddcn{u}\leq \ddcn{u}$.  Hence, the measure $\psi\ddcn{u}$ satisfies condition (2),
and therefore also (1). Thus, there exists a function $\varphi\in\E_1$ such that $\ddcn{\varphi}=\psi\ddcn{u}$
and from the proof of (2) implies (1) it follows  that $u\leq \varphi$. This, together with Theorem~\ref{thm_holder2} yields that
\begin{multline*}
\int_{\Omega}(-v_j)\psi d\mu=\int_{\Omega}(-v_j)\ddcn{\varphi}\leq e_1(v_j)^{\frac{1}{n+1}}\left(\int_{\Omega}(-\varphi)\ddcn{\varphi}\right)^{\frac{n}{n+1}}\\ \leq
e_1(v_j)^{\frac{1}{n+1}}\left(\int_{\Omega}(-u)\psi\ddcn{u}\right)^{\frac{n}{n+1}}
=e_1(v_j)^{\frac{1}{n+1}}\left(\int_{\Omega}(-u)\psi d\mu\right)^{\frac{n}{n+1}}\, .
\end{multline*}
This means that $d\nu$ is absolutely continuous w.r.t. $d\mu$, and therefore there exists a function $f\in L^1(\mu)$, $f\leq 0$, such that $d\nu=f d\mu$. Let now $\psi\in L^{\infty}(\Omega), 0\leq \psi\leq 1$. By a similar argument as above,
\begin{equation}\label{proofA_1}
\int_{\Omega}\psi f\, d\mu= \lim_{j\to+\infty} \int_{\Omega} \psi v_{j}\, d\mu
\end{equation}
(Choose   $\psi_{\varepsilon} \in C^{\infty}_0(\Omega)$ so that  $\int |\psi - \psi_{\varepsilon}|(-f-u)d\mu \leq \varepsilon$
and continue  as above.)

Hence, there exist finite convex combinations of $v_j$, denoted by $w_j$, that converges to $f$
in $L^1 (\mu)$. Therefore there exists a subsequence $\{w_{j_k}\}$ of $\{w_j\}$ that converges to $f$ a.e. $[\mu]$. From now on we shall use the notation $\{w_j\}$ instead of $\{w_{j_k}\}$. Set $v=\lim_{j\to+\infty} \left(\sup_{k\geq j} v_k\right)^*$, then it follows from Fatou's lemma that $f\leq v$. Furthermore, we get that $\lim_{j\to+\infty} \left(\sup_{k\geq j} w_k\right)^*=v$
since $\{v'_j\}, \{v_j\}$ and $\{w_j\}$  converges to the same limit as distributions, $v\in\E_1$, and
\[
\int_{\Omega}v d\mu=\int_{\Omega}\lim_{j\to+\infty} \left(\sup_{k\geq j} w_k\right)^* d\mu
=\int_{\Omega}\lim_{j\to+\infty} \left(\sup_{k\geq j} w_k\right) d\mu=\int_{\Omega}f d\mu\, .
\]
Thus, $v=f= \lim_{j\to+\infty} \left(\sup_{k\geq j} v'_k\right)$ a.e. $[\mu]$ and it follows from~(\ref{proofA_1}) that
\[
\lim_{j\to\infty}\int_{\Omega}v_j d\mu=\int_{\Omega}v\, d\mu\, .
\]
Then
\[
\int_{\Omega}|v_j-v| d\mu\leq \int_{\Omega}\left(\max_{k\geq j} v_k -v_j\right)d\mu +
\int_{\Omega}\left(\max_{k\geq j} v_k -v\right)d\mu =I_1+ I_2\, .
\]
But $I_1$ converges to $0$, as $j\to \infty$, since
\[
\lim_{j\to\infty}\int_{\Omega}\max_{k\geq j} v_k\,  d\mu=\lim_{j\to\infty}\int_{\Omega}v_jd\mu\, ,
\]
and $I_2$ converges to $0$ by the monotone convergence theorem. Thus,
\[
\lim_{j\to \infty}\int_{\Omega}|v_j-v| d\mu=0\, ,
\]
i.e. $v_j$ converges to $v$ in $L^1(\mu)$. Thus, every subsequence of $\{v'_j\}$ contains a subsequence that converges to
$v=f= \lim_{j\to+\infty} \left(\sup_{k\geq j} v'_k\right)$  in $L^1(\mu)$. This ends the proof of Theorem A.
\end{proof}

\end{document}